\newtheorem{question}[theorem]{Question}
\begin{document}

\markboth{V.\ Bardakov, A.\ Vesnin, B.\ Wiest}
{Dynnikov coordinates on virtual braid groups}

\catchline{}{}{}{}{}

\title{DYNNIKOV COORDINATES ON VIRTUAL BRAID GROUPS}

\author{VALERIY G. BARDAKOV and ANDREI YU. VESNIN}

\address{Sobolev Institute of Mathematics\\ pr. Koptyuga 4\\ 630090, Novosibirsk, Russia\\ bardakov@math.nsc.ru, vesnin@math.nsc.ru}

\author{BERT WIEST}
\address{IRMAR,  Universit\'e de Rennes 1,\\ Campus de Beaulieu\\ 35042 Rennes Cedex, France\\ bertold.wiest@univ-rennes1.fr}

\maketitle

\begin{abstract}
We define Dynnikov coordinates on virtual braid groups. We prove that they are faithful invariants of virtual 2-braids, and present evidence that they are also very powerful invariants for general virtual braids. 
\end{abstract}

\keywords{braid; virtual braid; Dynnikov coordinates; faithful invariants}

\ccode{Mathematics Subject Classification 2010: 20F36, 57M25}


\section{Virtual braid groups}
\label{sec1}

The group of virtual braids  $VB_n$, $n\geqslant 2$, on $n$ strings was introduced by Kauffman~\cite{Kauffman} as a generalization of the classical braid group $B_n$. The most useful system of generators and defining relations of $VB_n$ was introduced by Vershinin in \cite{Vershinin}. The generators of $VB_n$ are  
\begin{equation}
\sigma_1, \ldots, \sigma_{n-1}, \rho_1, \ldots, \rho_{n-1}, \label{eq1}
\end{equation}
and the defining relations are the following: 
\begin{equation}
\sigma_i \, \sigma_{i+1} \, \sigma_i = \sigma_{i+1} \, \sigma_i \, \sigma_{i+1},  \qquad  \sigma_i \, \sigma_j = \sigma_j \, \sigma_i  \, \hbox{ \ if \ } |i-j| > 1,  \label{eq2}
\end{equation}
\begin{equation}
\rho_i^2 = 1, i=1, \ldots, n-1, \quad \rho_i \, \rho_{i+1} \, \rho_i = \rho_{i+1} \, \rho_i \, \rho_{i+1}, \quad \rho_i \, \rho_j = \rho_j \, \rho_i \hbox{ \ if \ }  |i-j| > 1, \label{eq3}
\end{equation}
\begin{equation}
\sigma_i \rho_j = \rho_j \sigma_i  \hbox{ \ \ if \ } |i-j| > 1, \qquad \rho_i \, \rho_{i+1} \, \sigma_i = \sigma_{i+1}  \, \rho_i  \, \rho_{i+1} .  \label{eq4}
\end{equation}
Thus, the group generated by $\sigma_1, \ldots, \sigma_{n-1}$ with relations (\ref{eq2}) is the braid group $B_n$; the group generated by $\rho_1, \ldots, \rho_{n-1}$ with relations (\ref{eq3}) is the symmetric group~$S_n$; the relations (\ref{eq4}) will be referred to as \emph{mixed relations}. The last presented relation is equivalent to 
\begin{equation}
\rho_{i+1} \, \rho_i \, \sigma_{i+1} = \sigma_i \, \rho_{i+1} \, \rho_i .
\end{equation}
We remark that the relations
\begin{equation}
\rho_i \, \sigma_{i+1} \, \sigma_i = \sigma_{i+1} \, \sigma_i \, \rho_{i+1}, \qquad \rho_{i+1} \, \sigma_i \, \sigma_{i+1}   =  \sigma_{i+1} \, \sigma_i \, \rho_i
\end{equation}
do not hold in $VB_n$, and these relations will be referred to as \emph{forbidden} relations. (Adding these forbidden relations 
yields the so-called \emph{braid permutation group} \cite{FRR}).

There is a natural epimorphism $\pi : VB_n \to S_n$ defined by 
$$
\pi (\sigma_i) = \pi (\rho_i) = \rho_i, \quad i=1, \ldots, n-1.  
$$
The kernel $\mbox{ker} (\pi)$ is called the \emph{virtual pure braid group} and is denoted by $VP_n$. Generators and relations for $VP_n$ are described in \cite{Bardakov}. It is easy to see that $VB_n$ is a semidirect product:  $VB_n = VP_n \rtimes S_n$.

\section{Coordinates on braid groups}
\label{sec2}

In \cite[Ch.~8]{book}, an action of the braid group $B_n$ on the integer  lattice $\mathbb Z^{2n}$  by piecewise-linear bijections is defined. 
For the reader's convenience we recall the definition. For $x \in \mathbb Z$ denote $x^+ = \max \{ 0, x \}$ and $x^- = \min \{ x, 0 \} $. 
Define actions 
$$
\sigma, \, \sigma^{-1} : \mathbb Z^4 \to \mathbb Z^4 
$$ 
on $(a,b,c,d) \in \mathbb Z^{4}$ as follows:
\begin{equation}
(a, b, c, d) \cdot  \sigma  = (a + b^+ + (d^+ - e)^+,  \, d - e^+,  \, c + d^- + (b^- + e)^-,  \, b + e^+), \label{action-by-sigma} 
\end{equation}
\begin{equation}
(a,b,c,d) \cdot  \sigma^{-1} = (a - b^+ - (d^+ + f)^+,  \, d + f^-,  \, c - d^-  - (b^- - f)^-,  \, b - f^-),  \label{action-by-sigma-inverse}
\end{equation}
where 
\begin{equation}
e = a - b^- - c + d^+, \qquad f = a + b^- - c - d^+ . \label{definition-e-f}
\end{equation}
For a given vector $(a_1, b_1, \ldots, a_n, b_n) \in \mathbb Z^{2n}$ we define the action by $\sigma_i^{\varepsilon} \in B_{n}$, where $i=1, \ldots, n-1$:
\begin{equation}
(a_1, b_1, \ldots, a_n, b_n) \cdot \sigma_i^{\varepsilon} = (a_1', b_1', \ldots, a_n', b_n'),  \label{action-by-generator-sigma-1}
\end{equation}
where $a_k' = a_k$, $b_k' = b_k$ if $k \neq i, i+1$, and 
\begin{equation}
(a_i', b_i', a_{i+1}', b_{i+1}') = \begin{cases}
\begin{array}{ll}
(a_i, b_i, a_{i+1}, b_{i+1}) \cdot \sigma, & \mbox{if } \, \varepsilon = 1, \cr  
(a_i, b_i, a_{i+1}, b_{i+1}) \cdot \sigma^{-1}, & \mbox{if } \, \varepsilon = -1. \cr  
\end{array}
\end{cases} \label{action-by-generator-sigma-2}
\end{equation}
For a word $w$ in the alphabet $\{ \sigma_1^{\pm 1}, \ldots, \sigma_{n-1}^{\pm 1}  \}$ we define an action by $w$: 
\begin{equation}
(a_1, b_1, \ldots  a_n, b_n) \cdot w = \begin{cases}
\begin{array}{ll}
(a_1, b_1, \ldots  a_n, b_n) , & \mbox{if } \, w= 1, \cr  
((a_1, b_1, \ldots  a_n, b_n)  \cdot \sigma_i^{\varepsilon}) \cdot w' , & \mbox{if } \, w = \sigma_i^{\varepsilon} w'. \cr  
\end{array}
\end{cases}
\end{equation}

It can be shown that the above action by $B_n$ on $\mathbb Z^{2n}$ is well defined, 
i.e. if two words $w_1$ and $w_2$ present the same element of the braid group $B_n$ then 
$$
(a_1, b_1, \ldots, a_n, b_n) \cdot w_1 = (a_1, b_1, \ldots, a_n, b_n) \cdot w_2 
$$ 
for any vector $(a_1, b_1, \ldots, a_n, b_n) \in \mathbb Z^{2n}$.   
By  the \emph{Dynnikov coordinates} of a braid we will mean the vector 
$(0,1, \ldots, 0, 1) \cdot w$, where $w$ is a word representing that braid. 

\begin{example}
Actions by some elements of $B_2$ on $(0,1,0,1) \in \mathbb Z^{2}$ are as follows: 
$$
\begin{array}{ll}
(0,1,0,1) \cdot  \sigma_1 = (1,0,0,2), & (0,1,0,1) \cdot  \sigma_1^{-1}  = (-1,0,0,2), \cr 
(0,1,0,1) \cdot  \sigma_1^2 = (1,-1,0,3), & (0,1,0,1) \cdot \sigma_1^{-2}  = (-1,-1,0,3), \cr 
\hspace{2,6cm}\vdots & \hspace{2,75cm}\vdots\cr
(0,1,0,1) \cdot \sigma_1^k = (1,-k+1,0,k+1), \phantom{xx} & (0,1,0,1) \cdot \sigma_1^{-k}  = (-1,-k+1,0,k+1), 
\end{array}
$$
where $k\in\mathbb N$. 
Also, acting by some elements of $B_{3}$ on $(0,1,0,1,0,1) \in \mathbb Z^{6}$ we have
$$
\begin{array}{l}
(0,1,0,1,0,1) \cdot \sigma_1 \sigma_2^{-1} = (1,0,-2,0,0,3), \cr 
(0,1,0,1,0,1) \cdot \sigma_1 \sigma_2 \sigma_1 = (2,0,1,0,0,3) .
\end{array}
$$
\end{example}

\begin{remark}
It is also shown in \cite{book} that Dynnikov coordinates are faithful invariants of braids, i.e.\ if $(0,1, \ldots, 0,1)  \cdot w_1 = (0,1, \ldots, 0,1) \cdot w_2$ then $w_1 = w_2$ in $B_n$; thus, Dynnikov coordinates are very useful for solving the words problem in $B_n$.  

Here is an outline of the proof: there is a bijection between vectors 
in~$\mathbb Z^{2n}$ and integer laminations of a sphere with $n+3$ 
punctures $P_0,P_1,\ldots,P_{n+1},P_\infty$; under this bijection, our 
action of~$B_n$ on $\mathbb Z^{2n}$ corresponds to the 
$B_n$-action on a disk containing punctures $P_1,\ldots,P_n$. The key 
observation is that this disk is \emph{filled} by the lamination encoded 
by the vector $(0,1, \ldots, 0,1)$ (i.e.\ cutting this disk along its 
intersection with the lamination yields only disks and once-punctured disks).
\end{remark}

\section{Coordinates on virtual braid groups}
\label{sec3}

Let us define an action by elements of $VB_{n}$  on $\mathbb Z^{2n}$. Consider the actions on $\mathbb Z^{4}$ by $\sigma$ and $\sigma^{-1}$ as defined in (\ref{action-by-sigma}) and (\ref{action-by-sigma-inverse}), and define the action by $\rho$ as the following permutation of coordinates:  
\begin{equation}
(a, b, c, d) \cdot  \rho = (c, d, a, b). \label{action-by-rho}
\end{equation}
For a given vector $(a_1, b_1, \ldots, a_n, b_n) \in \mathbb Z^{2n}$ we define the action by $\rho_{i} \in VB_{n}$, $i=1, \ldots, n-1$:
\begin{equation}
(a_1, b_1, \ldots, a_n, b_n) \cdot  \rho_i = (a_1', b_1', \ldots, a_n', b_n'), 
\end{equation}
where $a_k' = a_k$, $b_k' = b_k$ for $k \neq i, i+1$, and
\begin{equation}
(a_i', b_i', a_{i+1}', b_{i+1}') =  (a_i, b_i, a_{i+1}, b_{i+1} ) \cdot  \rho.
\end{equation}
The action by $\sigma_{i}^{\varepsilon} \in VB_{n}$ on $\mathbb Z^{2n}$ is defined according to (\ref{action-by-generator-sigma-1}) and (\ref{action-by-generator-sigma-2}). 

Suppose that $w$ is a word in the alphabet $\{ \sigma_1^{\pm1}, \ldots, \sigma_{n-1}^{\pm 1}, \rho_1, \ldots, \rho_{n-1}\}$ representing an element of the group $VB_n$. Then we define
\begin{equation}
(a_1, b_1, \ldots, a_n, b_n) \cdot w = \begin{cases} \begin{array}{ll}
( ( a_1, b_1, \ldots, a_n, b_n) \cdot \rho_i ) \cdot w', & \mbox{if } \, w = \rho_i w' , \cr
( ( a_1, b_1, \ldots, a_n, b_n) \cdot \sigma_i^{\varepsilon} ) \cdot w', & \mbox{if} \, w = \sigma_i^{\varepsilon} w' . \cr
\end{array}
\end{cases}
\end{equation}

To show that the action by $VB_{n}$ on $\mathbb Z^{2n}$ is correctly defined we will verify that the defining relations of the group $VB_n$ are satisfied. Since $\rho_i$ acts by permuting pairs of coordinates, the relations of the group $S_n$ are obviously satisfied. The fact that relations of the group $B_n$ are satisfied follows from \cite{book}. So, we need to check only the case of mixed relations, i.e. that for any $v \in \mathbb Z^{2n}$ the relations
\begin{equation}
v \cdot ( \sigma_i \, \rho_j ) = v \cdot (\rho_j \, \sigma_i), \quad | i - j | > 1, \label{eqn_mixed1}
\end{equation}
\begin{equation}
v \cdot ( \rho_i \, \rho_{i+1} \, \sigma_i ) = v \cdot ( \sigma_{i+1} \, \rho_i \, \rho_{i+1} )  \label{eqn_mixed2}
\end{equation}
hold.  Relations (\ref{eqn_mixed1}) hold obiously, because $\sigma_i$ acts non-trivial only on the subvector $(a_i, b_i, a_{i+1}, b_{i+1})$ and $\rho_j$ acts non-trivially only on the subvector $(a_j, b_j, a_{j+1}, b_{j+1})$. In order to verify (\ref{eqn_mixed2})  it is enough to consider the case $i=1$ in the group $VB_3$. Denote 
$$
(x,y,z,t) \cdot \sigma = (a'(x,y,z,t), b'(x,y,z,t), c'(x,y,z,t), d'(x,y,z,t)) 
$$
From  
$$
\begin{gathered}
(a_1, b_1, a_2, b_2, a_3, b_3) \cdot (\rho_1 \, \rho_2 \, \sigma_1)  =  (a_2, b_2, a_1, b_1, a_3, b_3) \cdot (\rho_2 \, \sigma_1) \cr  =  (a_2, b_2, a_3, b_3, a_1, b_1) \cdot \sigma_1  =   ((a_2, b_2, a_3, b_3) \cdot \sigma, a_1, b_1)
\end{gathered}
$$
and 
$$
\begin{gathered}
(a_1, b_1, a_2, b_2, a_3, b_3) \cdot (\sigma_2 \, \rho_1 \, \rho_2)  =  (a_1, b_1, (a_2, b_2, a_3, b_3) \cdot  \sigma ) \cdot \rho_1 \, \rho_2 \cr = (a' (a_2, b_2, a_3, b_3), b' (a_2, b_2, a_3, b_3), a_1, b_1, c' (a_2, b_2, a_3, b_3),  d' (a_2, b_2, a_3, b_3)) \cdot \rho_2 \cr  =  (a' (a_2, b_2, a_3, b_3), b' (a_2, b_2, a_3, b_3), c' (a_2, b_2, a_3, b_3), d' (a_2, b_2, a_3, b_3), a_1, b_1)  \cr = ((a_2, b_2, a_3, b_3) \cdot \sigma, a_1, b_1)
\end{gathered}
$$
we see that (\ref{eqn_mixed2}) holds. 

\begin{example}
Actions by some elements of $V B_2$  on $(0,1,0,1) \in \mathbb Z^{4}$ are as follows: 
$$
\begin{array}{l}
(0,1,0,1) \cdot \sigma_1 \rho_1 = (0,2,1,0),  \cr 
(0,1,0,1) \cdot \sigma_1 \rho_{1} \sigma_{1} = (3,0,0,2) , \cr 
(0,1,0,1) \cdot \sigma_1 \rho_{1} \sigma_{1}^{-1} = (-2,-1,1,3).  \cr 
\end{array}
$$
\end{example}

Let us demonstrate that the forbidden relations are not satisfied. More exactly, we show that for $v = (0,1,0,1,0,1)$ we get
\begin{equation}
v \cdot (\rho_1 \, \sigma_2 \, \sigma_1 ) \neq v \cdot (\sigma_2 \, \sigma_1 \, \rho_2 ) , \label{eqn_bad1}
\end{equation}
\begin{equation}
v \cdot (\rho_2 \, \sigma_1 \, \sigma_2 ) \neq v \cdot (\sigma_1 \, \sigma_2 \, \rho_1 ) . \label{eqn_bad2}
\end{equation}
Indeed, (\ref{eqn_bad1}) holds because 
$$
(0,1,0,1,0,1) \cdot (\rho_1 \, \sigma_2 \, \sigma_1) = (0,1,0,1,0,1) \cdot \sigma_2 \, \sigma_1 = (0,1,1,0,0,2) \cdot \sigma_1 = (2,0,0,1,0,2), 
$$
but
$$
(0,1,0,1,0,1) \cdot (\sigma_2 \, \sigma_1 \, \rho_2) = (2,0,0,1,0,2) \cdot \rho_2 = (2,0,0,2,0,1).
$$
Analogously, (\ref{eqn_bad2}) holds because 
$$
(0,1,0,1,0,1) \cdot (\rho_2 \, \sigma_1 \, \sigma_2) = (0,1,0,1,0,1) \cdot \sigma_1 \, \sigma_2 = (1,0,0,2,0,1) \cdot \sigma_2 = (1,0,2,0,0,3), 
$$
but
$$
(0,1,0,1,0,1) \cdot (\sigma_1 \, \sigma_2 \, \rho_1) = (1,0,2,0,0,3) \cdot \rho_1 = (2,0,1,0,0,3).
$$

\begin{question} \label{questManturov} 
Is there any relation between our coordinates on $VB_n$ and the invariant
of virtual braids defined by Manturov in \cite{Manturov}?
\end{question}

\section{Faithfulness of the $VB_{n}$-action on $\mathbb Z^{2n}$} 

In this section we will be concerned with the following

\begin{question} \label{QuestFaithful} 
Is the $VB_n$-action on $\mathbb Z^{2n}$ faithful? In other words, is it true that only the trivial element  of~$VB_n$ acts as the identity on $\mathbb Z^{2n}$?
\end{question}

In computer experiments, we have tested several billion ($10^9$) random virtual braids with 3, 4 and 5 strands, but failed to  find one that would provide a negative answer. The programs used for these tests (written in Scilab) can be obtained from B.~Wiest's web page \cite{Wwebpage}.

It should be stressed that nontrivial elements of $VB_n$ may very well act trivially on individual vectors. Let us, for instance, look at the $VB_3$-action on $(0, 1, 0, 1, 0, 1)$. The actions of $\rho_1$ and $\rho_2$ obviously fix this vector, but those  of many other braids do, too. Here is one particularly striking example: 

\begin{example}
The virtual 3 strand braid 
$$
\beta=\sigma_1 \rho_2 \sigma_1 \sigma_2^{-1} \sigma_1 \sigma_2 \sigma_1^{-1} \rho_1 \sigma_2 \rho_1 \sigma_1 \rho_2 \sigma_1^{-1} \rho_2 \sigma_2^{-1} \sigma_1^{-1} \sigma_2 \sigma_1^{-1} \rho_2 \sigma_1^{-1}
$$ 
acts trivially on the vector $(0, 1, 0, 1, 0, 1)$, and indeed in computer experiments, we found that it acted nontrivially on only about 0.{\small 25}\% of randomly generated vectors of $\mathbb Z^6$ with integer coefficients between $-100$ and $100$. In this sense, $\beta$ is ``nearly a negative answer'' to Question \ref{QuestFaithful}.
Another one is the virtual braid  $\sigma_2^{-1}  \sigma_1  \rho_2  \sigma_2  \sigma_1  \sigma_2^{-1}  \rho_2  \sigma_1   \rho_2  \sigma_2 \rho_1  \sigma_2^{-1}  \rho_1  \sigma_1^{-1}  \sigma_2^{-1}  \rho_2  \sigma_1^{-1}  \sigma_2$ which also moves only about 0.{\small 6}\% of random vectors.
\end{example}

We now turn our attention to the case $n=2$. 

\begin{example}
It is known \cite{Manturov} that  $\beta = (\sigma_{1}^{2} \rho_{1} \sigma_{1}^{-1} \rho_{1} \sigma_{1}^{-1} \rho_{1})^{2} \in VB_{2}$ and the  identity element ${\bf e} \in VB_{2}$ cannot be distinguished by the Burau representation. Considering actions by these elements on $(0,1,0,1)$ we get  $(0,1,0,1) \cdot  \beta = (85, 49, -90, -47)$, while $(0,1,0,1) \cdot {\bf e} = (0, 1, 0, 1)$.  Thus, these elements are distinguished by Dynnikov coordinates.
\end{example}

In fact, in the case $n=2$ we have a positive answer to Question~\ref{QuestFaithful}. 

\begin{theorem}\label{T:VB2}
The $VB_{2}$--action on $\mathbb Z^{4}$ given by the above formulae is faithful.
\end{theorem}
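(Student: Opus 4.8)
The plan is to exploit the very simple algebraic structure of $VB_2$. For $n=2$ there is only one index available, so every braid relation in~(\ref{eq2}), every $\rho$-relation in~(\ref{eq3}) beyond $\rho_1^2=1$, and every mixed relation in~(\ref{eq4}) involves a generator that does not exist; the only surviving relation is $\rho_1^2=1$. Hence $VB_2=\langle\sigma_1\rangle*\langle\rho_1\rangle\cong\mathbb{Z}*\mathbb{Z}/2\mathbb{Z}$, a free product, with $\sigma:=\sigma_1$ of infinite order and $\rho:=\rho_1$ an involution. Every nontrivial element is then conjugate to a cyclically reduced word of one of the forms $\sigma^{k}$ with $k\neq 0$, or $\rho$, or an alternating product $\sigma^{k_1}\rho\,\sigma^{k_2}\rho\cdots\sigma^{k_m}\rho$ with all $k_i\neq 0$. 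Because conjugate group elements act by conjugate permutations of $\mathbb{Z}^4$, an element acts trivially if and only if its cyclic reduction does; so it suffices to prove that each cyclically reduced nontrivial word moves at least one vector.

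The first two families are immediate. A power $\sigma^{k}$ acts nontrivially because the $B_2$--action on $\mathbb{Z}^4$ is already faithful (Section~\ref{sec2}); explicitly $(0,1,0,1)\cdot\sigma^{k}=(\mathrm{sgn}(k),\,1-|k|,\,0,\,1+|k|)\neq(0,1,0,1)$ for $k\neq 0$. The involution $\rho$ sends, for instance, $(0,1,0,2)$ to $(0,2,0,1)$. Thus the whole difficulty lies in the alternating words $w=\sigma^{k_1}\rho\cdots\sigma^{k_m}\rho$.

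For these I would argue dynamically. Since $\rho$ merely swaps the pairs $(a,b)\leftrightarrow(c,d)$, it preserves any function of the coordinates that is symmetric under this swap. The plan is to produce a forward-invariant region $\Omega\subset\mathbb{Z}^4$ together with a function $N\colon\Omega\to\mathbb{Z}_{\geqslant 0}$ satisfying $N\circ\rho=N$ on $\Omega$, and such that each $\sigma$--block $\sigma^{k_i}$ (for either sign of $k_i$) maps $\Omega$ into $\Omega$ and strictly increases $N$. Granting this, for any $v\in\Omega$ one obtains $N(v\cdot w)>N(v)$, so $v\cdot w\neq v$ and $w$ acts nontrivially; indeed the $\langle w\rangle$--orbit of $v$ is then unbounded. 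The explicit formulae point to the right choice: tracing the base vector gives $(0,1,0,1)\cdot\sigma^{k_1}\rho=(0,\,1+|k_1|,\,\mathrm{sgn}(k_1),\,1-|k_1|)$, whose $\ell^1$--norm already exceeds that of $(0,1,0,1)$, and a direct computation of a further block shows two coordinates stabilising while $|b|$, $|d|$ and the $\ell^1$--norm are driven upward. This suggests taking $N$ to be (a variant of) the $\ell^1$--norm $|a|+|b|+|c|+|d|$ and $\Omega$ a cone in which this growth is monotone.

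The hard part will be choosing $\Omega$ and $N$ so that the monotonicity holds uniformly over all sign patterns of the exponents $k_i$. The map $\sigma^{\pm1}$ is only piecewise linear, its formulae~(\ref{action-by-sigma})--(\ref{action-by-sigma-inverse}) branching on the signs of $b$, $d$ and of the auxiliary quantities $e,f$ of~(\ref{definition-e-f}), and naive choices are fragile: for example $\sigma$ carries the cone $\{|c|>|a|\}$ into $\{|a|>|c|\}$, but $\sigma^{-1}$ does not, so such a cone cannot serve as $\Omega$ and a straightforward ping-pong on half-spaces fails. I therefore expect the core of the proof to be a careful but finite case analysis which, region by region, verifies both the forward-invariance of $\Omega$ and the strict growth of $N$ under $\sigma^{k}$ for every $k\neq 0$. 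Once these local estimates are in place, the free-product normal form assembles them into the global statement that only the trivial element of $VB_2$ acts as the identity on $\mathbb{Z}^4$.
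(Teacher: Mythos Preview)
Your strategy is sound and is essentially the paper's own: both arguments rest on a finite case analysis of the piecewise-linear maps $\sigma^{\pm 1}$ over sign-determined cones in $\mathbb Z^4$, showing that along any reduced word the orbit of a well-chosen vector can never return. Your framing via conjugacy and cyclically reduced words in $VB_2\cong\mathbb Z*\mathbb Z/2\mathbb Z$, together with an $\ell^1$-type Lyapunov function $N$ on a forward-invariant cone $\Omega$, is a correct reformulation; the paper in fact remarks afterwards that the $\ell^1$-norm strictly increases along every $\sigma^{\pm 1}$-step of its diagram, confirming that your proposed $N$ works.

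The gap is that what you call ``the hard part'' is in fact the entire proof, and you have not carried it out. The paper executes precisely this analysis: starting from the vector $(0,2,0,1)\in(0,+,0,+)$ (the asymmetry $2\neq 1$ is needed so that $\rho$ itself moves it), it exhibits a directed graph on eight sign-pattern boxes such as $(+,-,-0,+)$ and $(+0,+,-,-)$, and verifies fourteen labelled arrows, one case at a time, showing that under a reduced word one leaves $(0,+,0,+)$ at the first $\sigma^{\pm 1}$ and never re-enters it. Two points worth noting for your write-up: first, the diagram is not a single $\Omega$ invariant under \emph{all} $\sigma^k$, but rather encodes the reducedness constraint (from a box reached by $\sigma^{-1}$ only $\sigma^{-1}$ and $\rho$ arrows are drawn), which is exactly why your ``naive cone'' attempts failed; second, the paper sidesteps your conjugacy reduction by tracking a single explicit vector, which is slightly more direct. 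If you actually perform the fourteen verifications (each a two-line simplification of the formulae once the signs of $b,d$ and of $e$ or $f$ are fixed), your proof will be complete and equivalent to the paper's.
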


\begin{proof}
We will, in fact show a stronger result than Theorem~\ref{T:VB2}, namely that the only element of $VB_2$ which acts trivially  on a vector $(0,x,0,y)$, where $x$ and $y$ are different positive integers, is the trivial one. As a simplest vector of this type one can take $(0,2,0,1)$. 

Consider the set of symbols $S = \{ 0, +, -, +0, -0 \}$. With $s \in S$ we associate the following subset of $\mathbb Z$:
\begin{eqnarray*}
& s = 0 
 \longleftrightarrow   \{ x \in \mathbb Z \, : \, x = 0 \}, & \cr
& s = +  \longleftrightarrow  \{ x \in \mathbb Z \, : \, x > 0 \}, \quad 
s = - \longleftrightarrow \{ x \in \mathbb Z \, : \, x < 0 \}, & \cr
& s = +0 \longleftrightarrow  \{ x \in \mathbb Z \, : \, x \geqslant 0 \}, \quad
s = -0  \longleftrightarrow  \{ x \in \mathbb Z \, : \, x \leqslant 0 \}.   &
\end{eqnarray*}

A quadruple  $(s_{1}, s_{2}, s_{3}, s_{4})$, where $s_{i} \in S$, indicates the set all quadruples $(a,b,c,d) \in \mathbb Z^{4}$ such that $a$, $b$, $c$, $d$ belongs to subset of $\mathbb Z$ associated with symbols $s_{1}$, $s_{2}$, $s_{3}$, $s_{4}$, respectively. For example, 
$$
(+, +0, 0, -) = \{ (a,b,c,d) \in \mathbb Z^{4} \, : \, a>0, \, b\geqslant 0, \, c=0, \, d<0 \} .
$$
For instance, the vector $(-1, -1, 0, 4)$ belongs to sets associated with $(-,-,0,+)$ and $(-,-0,+0,+)$, as well as with some other quadruples of symbols.    

Let $\sigma, \sigma^{-1}, \rho  : \mathbb Z^{4} \to \mathbb Z^{4}$ be the transformations defined by (\ref{action-by-sigma}), (\ref{action-by-sigma-inverse}), (\ref{action-by-rho}).  Let us apply a sequence of such transformations (without fragments $ \sigma \sigma^{-1} $, or $\sigma^{-1} \sigma$,  or $\rho \rho$) to the initial vector, say  $(0,2,0,1)$. The statement of the theorem will follow from the fact that we trace out a path in the diagram shown in Figure~\ref{F:VB2}, where an arrow with labels $\beta$ and $i. \beta$, where $i \in \{ 1, 2, \ldots, 14 \}$ and $\beta \in \{ \sigma, \sigma^{-1}, \rho \}$ from box $(s_{1}, s_{2}, s_{3}, s_{4})$ to box $(t_{1}, t_{2}, t_{3}, t_{4})$ means that the image of any element of the subset of $\mathbb Z^{4}$ associated with $(s_{1}, s_{2}, s_{3}, s_{4})$ under tranformation $\beta$ belongs to the subset of $\mathbb  Z^{4}$ associated with $(t_{1}, t_{2}, t_{3}, t_{4})$. The numbering of arrows by $1, 2, \ldots, 14$ is done for the reader's convenience in the further discussion of cases. 

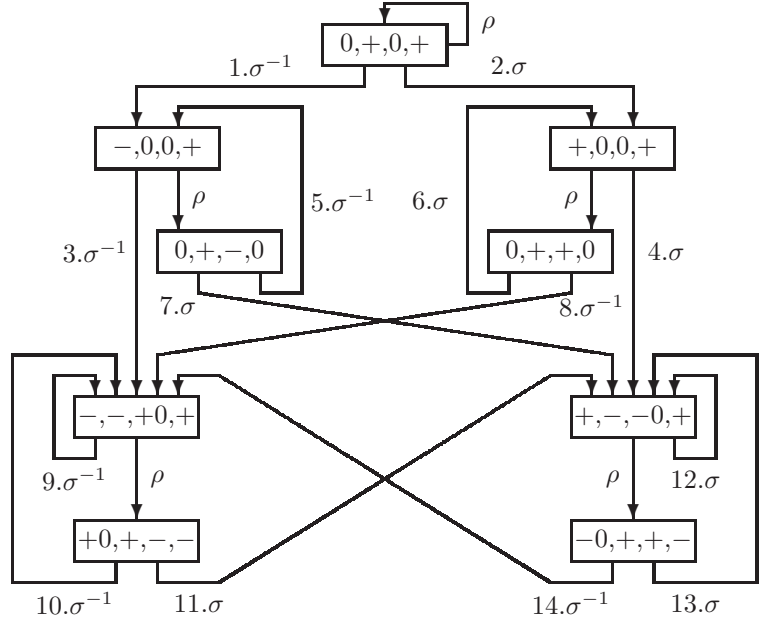
\begin{figure}[ht]
\begin{center}
\unitlength=.55mm
\begin{picture}(0,150)
\thicklines
\put(25,140){\makebox(0,0){$\rho$}}
\qbezier(15,135)(15,135)(20,135)
\qbezier(20,135)(20,135)(20,145)
\qbezier(20,145)(20,145)(0,145)
\put(0,145){\vector(0,-1){5}}
\qbezier(-15,130)(-15,130)(15,130)
\qbezier(-15,140)(-15,140)(15,140)
\qbezier(-15,130)(-15,130)(-15,140)
\qbezier(15,130)(15,130)(15,140)
\put(0,135){\makebox(0,0){0,+,0,+}}
\qbezier(40,105)(40,105)(70,105)
\qbezier(40,115)(40,115)(70,115)
\qbezier(40,105)(40,105)(40,115)
\qbezier(70,105)(70,105)(70,115)
\put(55,110){\makebox(0,0){+,0,0,+}}
\put(50,120){\vector(0,-1){5}}
\put(60,120){\vector(0,-1){5}}
\qbezier(-40,105)(-40,105)(-70,105)
\qbezier(-40,115)(-40,115)(-70,115)
\qbezier(-40,105)(-40,105)(-40,115)
\qbezier(-70,105)(-70,105)(-70,115)
\put(-55,110){\makebox(0,0){$-$,0,0,+}}   %
\put(-50,120){\vector(0,-1){5}}
\put(-60,120){\vector(0,-1){5}}
\qbezier(5,130)(5,130)(5,125)
\qbezier(5,125)(5,125)(60,125)
\qbezier(60,125)(60,125)(60,115)
\put(30,130){\makebox(0,0){$2. \sigma$}}
\qbezier(-5,130)(-5,130)(-5,125)
\qbezier(-5,125)(-5,125)(-60,125)
\qbezier(-60,125)(-60,125)(-60,115)
\put(-30,130){\makebox(0,0){$1. \sigma^{-1}$}}
\qbezier(25,80)(25,80)(55,80)
\qbezier(25,90)(25,90)(55,90)
\qbezier(25,80)(25,80)(25,90)
\qbezier(55,80)(55,80)(55,90)
\put(40,85){\makebox(0,0){0,+,+,0}}
\qbezier(50,105)(50,105)(50,90)
\put(50,95){\vector(0,-1){5}}
\put(45,97.5){\makebox(0,0){$\rho$}}
\qbezier(60,105)(60,105)(60,50)
\put(68,85){\makebox(0,0){$4. \sigma$}}
\qbezier(30,80)(30,80)(30,75)
\qbezier(30,75)(30,75)(20,75)
\qbezier(20,75)(20,75)(20,120)
\qbezier(20,120)(20,120)(50,120)
\qbezier(50,120)(50,120)(50,115)
\put(11,97.5){\makebox(0,0){$6. \sigma$}}
\qbezier(-25,80)(-25,80)(-55,80)
\qbezier(-25,90)(-25,90)(-55,90)
\qbezier(-25,80)(-25,80)(-25,90)
\qbezier(-55,80)(-55,80)(-55,90)
\put(-40,85){\makebox(0,0){0,+,$-$,0}}   %
\qbezier(-50,105)(-50,105)(-50,90)
\put(-50,95){\vector(0,-1){5}}
\put(-45,97.5){\makebox(0,0){$\rho$}}
\qbezier(-60,105)(-60,105)(-60,50)
\put(-70,85){\makebox(0,0){$3. \sigma^{-1}$}}
\qbezier(-30,80)(-30,80)(-30,75)
\qbezier(-30,75)(-30,75)(-20,75)
\qbezier(-20,75)(-20,75)(-20,120)
\qbezier(-20,120)(-20,120)(-50,120)
\qbezier(-50,120)(-50,120)(-50,115)
\put(-10,97.5){\makebox(0,0){$5. \sigma^{-1}$}}
\put(50,72){\makebox(0,0){$8. \sigma^{-1}$}}
\qbezier(45,80)(45,80)(45,75)
\qbezier(45,75)(45,75)(-55,60)
\qbezier(-55,60)(-55,60)(-55,50)
\put(-50,72){\makebox(0,0){$7. \sigma$}}
\qbezier(-45,80)(-45,80)(-45,75)
\qbezier(-45,75)(-45,75)(55,60)
\qbezier(55,60)(55,60)(55,50)
\put(-50,55){\vector(0,-1){5}}
\put(-55,55){\vector(0,-1){5}}
\put(-60,55){\vector(0,-1){5}}
\put(-65,55){\vector(0,-1){5}}
\put(-70,55){\vector(0,-1){5}}
\qbezier(-75,40)(-75,40)(-45,40)
\qbezier(-75,50)(-75,50)(-45,50)
\qbezier(-75,40)(-75,40)(-75,50)
\qbezier(-45,40)(-45,40)(-45,50)
\put(-60,45){\makebox(0,0){$-$,$-$,+0,+}}   %
\put(50,55){\vector(0,-1){5}}
\put(55,55){\vector(0,-1){5}}
\put(60,55){\vector(0,-1){5}}
\put(65,55){\vector(0,-1){5}}
\put(70,55){\vector(0,-1){5}}
\qbezier(75,40)(75,40)(45,40)
\qbezier(75,50)(75,50)(45,50)
\qbezier(75,40)(75,40)(75,50)
\qbezier(45,40)(45,40)(45,50)
\put(60,45){\makebox(0,0){+,$-$,$-$0,+}}   %
\put(-55,30){\makebox(0,0){$\rho$}}
\put(-60,40){\vector(0,-1){20}}
\qbezier(-75,10)(-75,10)(-45,10)
\qbezier(-75,20)(-75,20)(-45,20)
\qbezier(-75,10)(-75,10)(-75,20)
\qbezier(-45,10)(-45,10)(-45,20)
\put(-60,15){\makebox(0,0){+0,+,$-$,$-$}}   %
\put(55,30){\makebox(0,0){$\rho$}}
\put(60,40){\vector(0,-1){20}}
\qbezier(75,10)(75,10)(45,10)
\qbezier(75,20)(75,20)(45,20)
\qbezier(75,10)(75,10)(75,20)
\qbezier(45,10)(45,10)(45,20)
\put(60,15){\makebox(0,0){$-$0,+,+,$-$}}    %
\qbezier(70,40)(70,40)(70,35)
\qbezier(70,35)(70,35)(80,35)
\qbezier(80,35)(80,35)(80,55)
\qbezier(80,55)(80,55)(70,55)
\put(75,30){\makebox(0,0){$12. \sigma$}}
\qbezier(65,10)(65,10)(65,5)
\qbezier(65,5)(65,5)(90,5)
\qbezier(90,5)(90,5)(90,60)
\qbezier(90,60)(90,60)(65,60)
\qbezier(65,60)(65,60)(65,55)
\put(75,0){\makebox(0,0){$13. \sigma$}}
\qbezier(-70,40)(-70,40)(-70,35)
\qbezier(-70,35)(-70,35)(-80,35)
\qbezier(-80,35)(-80,35)(-80,55)
\qbezier(-80,55)(-80,55)(-70,55)
\put(-75,30){\makebox(0,0){$9. \sigma^{-1}$}}
\qbezier(-65,10)(-65,10)(-65,5)
\qbezier(-65,5)(-65,5)(-90,5)
\qbezier(-90,5)(-90,5)(-90,60)
\qbezier(-90,60)(-90,60)(-65,60)
\qbezier(-65,60)(-65,60)(-65,55)
\put(-75,0){\makebox(0,0){$10. \sigma^{-1}$}}
\qbezier(-55,10)(-55,10)(-55,5)
\qbezier(-55,5)(-40,5)(-40,5)
\qbezier(-40,5)(-40,5)(40,55)
\qbezier(40,55)(40,55)(50,55)
\put(-45,0){\makebox(0,0){$11. \sigma$}}
\qbezier(55,10)(55,10)(55,5)
\qbezier(55,5)(40,5)(40,5)
\qbezier(40,5)(40,5)(-40,55)
\qbezier(-40,55)(-40,55)(-50,55)
\put(45,0){\makebox(0,0){$14. \sigma^{-1}$}}
\end{picture}
\end{center}
\caption{The diagram of actions of $VB_2$.
}
\label{F:VB2}
\end{figure}

It is easy to see from (\ref{action-by-sigma}), (\ref{action-by-sigma-inverse}) and (\ref{action-by-rho}) that for any $\beta \in VB_{2}$ there is the following invariant: if $(a^{*}, b^{*}, c^{*}, d^{*}) = (a,b,c,d) \cdot \beta$ then $b^{*} + d^{*} = b + d$. In particular, if the initial vector is taken to be $(0,2,0,1)$ then for any vector in the diagram the sum of its second and fourth coordinates is equal to 3. 

Below to simplify expressions we will use the following notations:
$$
(a', b', c', d' ) = (a, b, c, d) \cdot \sigma, \qquad (a'', b'', c'', d'') = (a, b, c, d) \cdot \sigma^{-1}.
$$
By (\ref{action-by-sigma} ) and (\ref{action-by-sigma-inverse}) we have 
\begin{equation}
\begin{cases} 
\begin{array}{l}
a' = a + b^+ + (- a + b^{-} + c)^+,  \cr 
b' = d - (a - b^{-} - c + d^{+})^+, \cr 
c' = c + d^-  + (a - c + d^{+})^-, \cr 
d' = b + (a - b^{-} - c + d^{+})^+, \cr 
\end{array}
\end{cases}
\qquad 
\begin{cases} 
\begin{array}{l}
a'' = a - b^+ - (a + b^{-} - c)^+,  \cr 
b'' = d + (a + b^{-} - c - d^{+})^-, \cr 
c'' = c - d^-  - (-a + c + d^{+})^-, \cr 
d'' = b - (a + b^{-} - c - d^{+})^-, \cr  
\end{array}
\end{cases} \label{actions}
\end{equation}

First of all we remark that the arrows of the diagram related to the action by $\rho$ hold obviously.  The other actions will be considered case by case according to the numbering of the arrows. 

\underline{1.}  Consider the action by $\sigma^{-1}$ on $(0,+,0,+)$. Each element of $(0,+,0,+)$ has a form $(0,b,0,d)$ for some $b, d > 0$. Since its image $(0,b,0,d) \cdot \sigma^{-1} = (-b, 0, 0, b+d)$ belongs to  $(-,0,0,+)$, the corresponding arrow of the diagram is proven. 

\underline{2.}  Consider the action by $\sigma$ on $(0,+,0,+)$. Each element of $(0,+,0,+)$ has a form $(0,b,0,d)$ for some $b, d > 0$. Since its image $(0,b,0,d) \cdot \sigma = (b, 0, 0, b+d)$ belongs to  $(+,0,0,+)$, the corresponding arrow of the diagram is proven. 

\underline{3.} Consider the action by $\sigma^{-1}$ on $(-,0,0,+)$. Let 
$a<0$, $d>0$ then $(a,0,0,d) \cdot \sigma^{-1} = (a, d + (a-d)^{-}, 0, -(a-d)^{-}) = (a,a,0,-a+d) \in (-,-,0,+) \subset (-, -, +0, +)$.

\underline{4.} Consider the action by $\sigma$ on $(+,0,0,+)$. Let $a, d>0$ then $(a,0,0,d) \cdot \sigma =  (a, -a, 0, a+d) \in (+,-,0,+) \subset (+, -, -0, +)$.

\underline{5.} Consider the action by $\sigma^{-1}$ on $(0,+,-,0)$. Let $b>0$, $c<0$ then $(0,b,c,0) \cdot \sigma^{-1} = (-b - (-c)^{+}, 0, c - (c)^{-}, b - (-c)^{-}) = (-b+c, 0, 0, b) \in (-,0,0,+)$.

\underline{6.} Consider the action by $\sigma$ on $(0,+,+,0)$. Let $b,c>0$ then $(0,b,c,0) \cdot \sigma =  (b+c, 0, 0, b) \in (+,0,0,+)$.

\underline{7.} Consider the action by $\sigma$ on $(0,+,-,0)$. Let $b > 0$, $c < 0$ then $(0,b,c,0) \cdot \sigma = (b, c, c, b-c) \in (+, -, -, +) \subset (+, -, -0, +)$. 

\underline{8.} Consider the action by $\sigma^{-1}$ on $(0,+,+,0)$. Let $b, c > 0$ then $(0,b,c,0) \cdot \sigma^{-1} = (-b, -c, c, b+c) \in (-, -, +, +) \subset (-, -, +0, +)$. 

\underline{9.} Let us demonstrate that $(-, -, +0, +) \cdot \sigma^{-1} \in (-, -, +0, +)$. Since  $a < 0$, $b < 0$, $c \geqslant 0$, $d > 0$, the action is given by formulae
\begin{equation*}
\begin{cases} 
\begin{array}{l}
a''= a  - (a + b - c)^+,  \cr 
b'' = d +  (a + b - c - d)^-, \cr 
c'' = c - (-a + c + d)^-, \cr 
d'' = b - (a + b - c - d)^-. \cr 
\end{array}
\end{cases}
\end{equation*}
Moreover, $a + b - c < 0$ and $a+b-c-d < 0$ imply that $(a,b,c,d) \cdot \sigma^{-1} = (a, a + b - c, c,  -a + c + d) \in (-, -, +0, +)$. 

\underline{10.} Let us demonstrate that $(+0, +, -, -) \cdot \sigma^{-1} \in (-, -, +0, +)$.  Since $a\geqslant 0$, $b>0$, $c<0$, $d<0$, the action is given by formulae  
\begin{equation*}
\begin{cases} 
\begin{array}{l}
a'' = a - b - (a - c)^+,  \cr 
b'' = d + (a - c )^-, \cr 
c''  = c - d - (-a + c)^-, \cr 
d'' = b - (a - c)^-. \cr 
\end{array}
\end{cases}
\end{equation*}
Moreover, $a-c > 0$ implies that $(a,b,c,d) \cdot \sigma^{-1} = (-b+c, d, a-d, b) \in (-, -, +, +) \subset (-,-,+0,+)$. 

\underline{11.} Let us demonstrate that $(+0, +, -, -) \cdot \sigma \in (+, -, -0, +)$. Since $a \geqslant 0$, $b > 0$, $c < 0$, $d < 0$, the action is given by formulae
\begin{equation*}
\begin{cases} 
\begin{array}{l}
a' = a +b + (-a + c)^+,  \cr 
b' = d -  (a - c )^+, \cr 
c'  = c + d + (a - c)^-, \cr 
d' = b + (a - c)^+. \cr 
\end{array}
\end{cases}
\end{equation*}
Moreover, $a-c>0$ implies that $(a,b,c,d) \cdot \sigma = (a+b, d - a + c, c+d, b + a - c) \in (+, -, -, +) \subset (+, -, -0, +)$. 

\underline{12.} Let us demonstrate that  $ (+, -, -0, +) \cdot \sigma = (+, -, -0, +) $. Since $a>0$, $b<0$, $c \leqslant 0$, $d > 0$, the action is given by formulae
\begin{equation*}
\begin{cases} 
\begin{array}{l}
a' = a + (-a + b + c)^+,  \cr 
b' = d -  (a - b  - c  + d)^+, \cr 
c'  = c  + (a - c + d)^-, \cr 
d' = b + (a - b- c + d)^+. \cr 
\end{array}
\end{cases}
\end{equation*}
Moreover, $-a + b + c < 0$, $a - b - c + d > 0$, and $a - c + d > 0$ imply that $(a,b,c,d) \cdot \sigma = (a, -a + b + c, c, a - c + d) \in (+, -, -0, +)$. 

\underline{13.} Let us demonstrate that  $ (-0, +, +, -) \cdot \sigma \in (+, -, -0, +) $. Since $a \leqslant 0$, $b > 0$, $c > 0$, $d < 0$, the action is given by formulae
\begin{equation*}
\begin{cases} 
\begin{array}{l}
a'= a + b + (-a + c)^+,  \cr 
b'= d-  (a - c )^+, \cr 
c' = c + d + (a - c)^-, \cr 
d'= b + (a - c)^+. \cr 
\end{array}
\end{cases}
\end{equation*}
Moreover, $a-c<0$ implies that $(a,b,c,d) \cdot \sigma = (b+c, d, d+a, b) \in (+, -, -, +) \subset (+, -, -0, +)$. 

\underline{14.} Let us demonstrate that $(-0, +, +, -) \cdot \sigma^{-1} = (-, -, +0, +)$. Since $a \leqslant 0$, $b>0$, $c>0$, $d<0$, the action is given by 
\begin{equation}
\begin{cases} 
\begin{array}{l}
a'' = a - b - (a - c)^+,  \cr 
b'' = d +  (a - c )^-, \cr 
c''  = c- d - (-a + c)^-, \cr 
d''= b - (a - c)^-. \cr 
\end{array}
\end{cases}
\end{equation}
Moreover, $a-c<0$ implies that $(a,b,c,d) \cdot \sigma^{-1} = (a-b, d+a-c, c-d, b-a+c) \in (-,-,+,+) \subset (-,-,+0,+)$.  
The proof is completed. 
\end{proof}

\begin{remark}
Theorem~\ref{T:VB2} allows us to introduce on $VB_{2}$ various ``coordinate systems''. For example, taking $(0,2,0,1)$ as the initial vector, for any $\beta \in VB_{2}$ one can define $(0,2,0,1) \cdot \beta$ as its coordinates. In this sense Theorem~\ref{T:VB2} gives an anolog of Dynnikov coordinates originally defined for braid groups. 
\end{remark}

\begin{remark}
It is shown in the proof of Theorem~\ref{T:VB2} that the $VB_{2}$-action is faithful on any vector of the form $(0,x,0,y)$, where $x$ and $y$ are different positive integers. Note that the action on some other vectors of $\mathbb Z^4$ can fail to be faithful. For example, $(0,0,0,1) \cdot \sigma = (0,0,0,1)$ and $(0,1,0,1) \cdot \rho = (0,1,0,1)$. 
\end{remark}

\begin{remark} 
Let us define the norm of a quadruple  $|| (a, b, c, d) || = |a| + |b| + |c| + |d|$. Obviously, the norm is invariant under the $\rho$-action. One can easily see from the proof of Theorem~\ref{T:VB2} that all the arrows labelled $\sigma$ or $\sigma^{-1}$ in Figure \ref{F:VB2} increase the norm. For example, considering case 13, one gets $|| (a,b,c,d) \cdot \sigma || = || (b+c, d, d+a, b)  || = |b+c| + |d| + |d+a| + |b| > |a| + |b| + |c| + |d|$, because $a \leqslant 0$, $b>0$, $c>0$, $d<0$. 
However, this property doesn't hold if one takes an arbitrary vector from $\mathbb Z^{4}$: $|| (7,4,1,1) || = 13$, but $|| (7,4,1,1) \cdot \sigma || = || (3,1,6,1) || = 11$. 
\end{remark}

\section*{Acknowledgements}

Work performed under the auspices of the Russian Foundation for Basic Research by the grant 10-01-00642 and by the French -- Russian  grant 10-01-91056. 
The first and the second named authors thank IRMAR,  Universit\'e de Rennes 1 and  Abdus Salam School of Mathematical Sciences, GC University Lahore for their hospitality.

\end{document}